\newtheorem{teo}{Theorem}
\newtheorem{lem}{Lemma}
\newtheorem*{cor}{Corollary}
\theoremstyle{definition}
\newtheorem{defi}{Definition}
\begin{document}
\author{Paul A. Blaga}
\address{University of Cluj-Napoca, Faculty of Mathematics }
\title[Morita invariance for superalgebras]{On the Morita invariance of the Hochschild homology of superalgebras}
\begin{abstract}
We provide a direct proof that the Hochschild homology of a
$\mathbb{Z}_2$-graded algebra is Morita invariant.
\end{abstract}
\keywords{superalgebras, Hochschild homology, Morita invariance}
\subjclass{17A70,16E40}
\maketitle
\section{Introduction}
The goal of this paper is to show that if $A$ and $B$ are two Morita equivalent unital
superalgebras, then they have the same Hochschild homology (in the $\mathbb{Z}_2$-graded sense, see
(Kassel, 1986)).
\section{The Hochschild homology of superalgebras}
The Hochschild complex for superalgebras (Kassel, 1986), is very similar to the analogous complex
for ungraded case. Namely, the chain groups are, as in the classical case, $C_m(R)=R^{\otimes
m+1}$, where, of course, the tensor product should be understood in the graded sense, while the
face maps and degeneracies are given by
\begin{equation}
\delta^m_i(a_0\otimes\dots\otimes a_m)=a_0\otimes \dots \otimes a_i a_{i+1}\otimes\dots a_n,\quad
\text{if}\quad 0\leq i <m,
\end{equation}
\begin{equation}
\delta^m_m(a_0\otimes\dots\otimes a_m)=(-1)^{|a_m|(|a_0|+\dots+|a_{m-1})}a_ma+0\otimes
a_1\otimes\dots \otimes a_{m-1},
\end{equation}
\begin{equation}
s^m_i(a_0\otimes\dots\otimes a_m)=a_0\otimes\dots\otimes a_i\otimes 1\otimes
a_{i+1}\otimes\dots\otimes a_m,\quad 0\leq i\leq m.
\end{equation}
Now the differential is defined in the usual way, meaning $d^m:C_m(R)\to C_{m-1}(R)$,
\begin{equation}
d^m=\sum\limits_{i=0}^m(-1)^i\delta_i^m.
\end{equation}
and the Hochschild  homology of the superalgebra is just the homology of the complex $(C(R),d)$. In
particular, it is easy to see that for any superalgebra $R$ we have
\begin{equation}
H_0(R)=R/\{R,R\},
\end{equation}
where $\{R,R\}$ is the subspace generated by the supercommutators.  of that element.

\section{The Morita invariance}
We shall simply give the definition of the Morita equivalence here. For a detailed approach, see
for, instance, the book of Bass (\cite{bass}). The definition is completely analogous to that from
the ungraded case.
\begin{defi}
If $A$ and $B$ are two unital, associative superalgebras over a graded
commutative superring $R$, then $A$ and $B$ are said to be \emph{Morita
equivalent} if there exists an $A-B$-bimodule $P$ and a $B-A$-bimodule  $Q$
such that $P\otimes _BQ\simeq A$ (as $A-A$-bimodules), while $Q\otimes_A
P\simeq B$ (as  $B-B$-bimodulea). The tensor products should be taken in the
graded sense.
\end{defi}
\begin{teo}\label{teorema}
Let $R$ be a commutative superring and $A$ and $B$ -- two unital
$R$-superalgebras (not necessarily commutative). Let, also, $P$ be an
$A-B$-bimodule which is projective over both rings and $Q$ -- an arbitrary
$B-A$-bimodule. Then there is an isomorphism
\begin{equation*}
F_*:H_*(A,P\otimes_B Q)\to H_*(B,Q\otimes_A P),
\end{equation*}
which is functorial in the 4-tuple $(A,B; P,Q)$.
\end{teo}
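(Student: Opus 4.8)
The plan is to show that both Hochschild homologies compute a single, manifestly symmetric derived functor, namely the graded $\mathrm{Tor}$ of $P$ and $Q$ over the enveloping superring $A\otimes_R B^{op}$. I would first record that, exactly as in the ungraded case, the bimodule Hochschild homology is a (relative, over $R$) $\mathrm{Tor}$: for a unital $R$-superalgebra $S$ and an $S$-bimodule $M$ one has $H_*(S,M)\cong\mathrm{Tor}^{S^e}_*(S,M)$, where $S^e=S\otimes_R S^{op}$ and all tensor products and signs are taken in the $\mathbb Z_2$-graded sense; this comes from comparing $C_*(S,M)$ with $M\otimes_{S^e}\mathrm{Bar}(S)$ for the two-sided bar resolution $\mathrm{Bar}(S)_n=S^{\otimes n+2}$.

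The conceptual heart is the degree-zero computation. Writing out the $A$-bimodule structure on $P\otimes_B Q$ and the $B$-bimodule structure on $Q\otimes_A P$, the super-cyclic rotation $p\otimes q\mapsto(-1)^{|p||q|}q\otimes p$ induces canonical isomorphisms
\begin{equation*}
A\otimes_{A^e}(P\otimes_B Q)\;\cong\;Q\otimes_{A\otimes_R B^{op}}P\;\cong\;B\otimes_{B^e}(Q\otimes_A P),
\end{equation*}
since each of the three spaces is $P\otimes_R Q$ divided by precisely the relations identifying the left and right $A$-actions, the left and right $B$-actions, and the $B$- (respectively $A$-) balancing. In particular $H_0(A,P\otimes_B Q)\cong H_0(B,Q\otimes_A P)$, and the middle term is symmetric in the two sides because $(A\otimes_R B^{op})^{op}=B\otimes_R A^{op}$ and graded $\mathrm{Tor}$ is symmetric under $S\mapsto S^{op}$.

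I would then promote this identity to all degrees by deriving it through a \emph{single} resolution $\mathcal F_\bullet\to P$ of $P$ by relatively projective $A$-$B$-bimodules, whose terms are relatively projective over $A$ on the left and over $B$ on the right by construction; crucially, no resolution of the arbitrary bimodule $Q$ is required. Since $P$ is projective, hence flat, as a right $B$-module, $-\otimes_B Q$ preserves exactness, so $\mathcal F_\bullet\otimes_B Q$ resolves $P\otimes_B Q$, and projectivity of $P$ as a left $A$-module makes each $\mathcal F_i\otimes_B Q$ acyclic for $A\otimes_{A^e}(-)$; hence $A\otimes_{A^e}(\mathcal F_\bullet\otimes_B Q)$ computes $H_*(A,P\otimes_B Q)$. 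Symmetrically, flatness of $P$ over $A$ makes $Q\otimes_A\mathcal F_\bullet$ resolve $Q\otimes_A P$, and projectivity over $B$ renders each $Q\otimes_A\mathcal F_i$ acyclic for $B\otimes_{B^e}(-)$, so $B\otimes_{B^e}(Q\otimes_A\mathcal F_\bullet)$ computes $H_*(B,Q\otimes_A P)$. Applying the degree-zero isomorphism levelwise identifies both with the one complex $Q\otimes_{A\otimes_R B^{op}}\mathcal F_\bullet$, and $F_*$ is the induced composite.

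The main obstacle is the derived step rather than the degree-zero bijection: one must verify that the levelwise isomorphism is a genuine quasi-isomorphism, that is, that the two projectivity hypotheses on $P$—over $A$ and over $B$—are exactly what force the required $\mathrm{Tor}$-acyclicities, since with $Q$ arbitrary all homological control must come from $P$ alone. Alongside this, the $\mathbb Z_2$-graded Koszul signs must be propagated consistently through every cyclic rotation and tensor-balancing, which is the principal bookkeeping burden in the super setting. Functoriality in the $4$-tuple $(A,B;P,Q)$ is then automatic, since the bar resolution, the rotation $p\otimes q\mapsto(-1)^{|p||q|}q\otimes p$, and the comparison maps are all natural in the four arguments.
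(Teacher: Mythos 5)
Your route is genuinely different from the paper's. The paper builds the double complex $C_{m,n}=P\otimes B^n\otimes Q\otimes A^m$, verifies by a long direct computation that the two Hochschild differentials anticommute, and runs the two spectral sequences of the total complex; its Lemma~\ref{lema1} (acyclicity of $H_*(A,M\otimes Q)$ for projective $Q$) collapses both at $E^2$, the first onto $H_*(A,P\otimes_BQ)$ and the second onto $H_*(B,Q\otimes_AP)$. You instead propose to exhibit both groups as a single symmetric object, the relative $\mathrm{Tor}_*^{A\otimes_RB^{\mathrm{op}}}(Q,P)$, computed from one resolution of $P$. That is a legitimate and more conceptual alternative: it makes the symmetry visible from the outset and replaces the page of sign-checking by standard homological algebra, and your levelwise identification $A\otimes_{A^e}(\mathcal F_i\otimes_BQ)\cong Q\otimes_{A\otimes_RB^{\mathrm{op}}}\mathcal F_i$ is the same signed cyclic rotation that underlies the paper's map $\omega_{m+1,n+1}$.

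Two points in your derived step are, however, stated incorrectly and need repair. First, ``since $P$ is flat as a right $B$-module, $-\otimes_BQ$ preserves exactness'' is a non sequitur: flatness of $P$ makes the functor $P\otimes_B-$ exact and says nothing about $-\otimes_BQ$. What you actually need is that the acyclic complex $\mathcal F_\bullet\to P\to 0$ stays exact after $-\otimes_BQ$; this holds because its terms are (relatively) free as right $B$-modules and $P$ itself is flat over $B$, so by dimension shifting every syzygy is flat over $B$ and the tensored complex has no homology in positive degrees. Second, you attribute the $H_*(A,-)$-acyclicity of each $\mathcal F_i\otimes_BQ$ to the projectivity of $P$ over $A$; in fact this has nothing to do with $P$ and comes from the induced shape of $\mathcal F_i$, which makes $\mathcal F_i\otimes_BQ$ an $A$-bimodule of the form $A\otimes N$ --- exactly the content of the paper's Lemma~\ref{lema1}. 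The projectivity of $P$ as a left $A$-module is consumed on the other side, in making $Q\otimes_A\mathcal F_\bullet$ a resolution of $Q\otimes_AP$. Finally, since nothing is assumed about $A$ and $B$ as $R$-modules, all your $\mathrm{Tor}$'s must be the relative ones and ``projective'' and ``flat'' must be read in the $R$-relative sense throughout; the paper sidesteps this entirely by never leaving the explicit complexes. With these corrections your argument goes through and yields the same functorial isomorphism.
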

Before actually proving the theorem, let us, first, prove a technical lemma.
\begin{lem}\label{lema1}
Let $A$ be a unital, associative superalgebra over a commutative superring. If
$M$ is an arbitrary left $A$-module, while $Q$ is a \emph{projective} right
$A$-module, then
\begin{equation*}
H_n(A,M\otimes Q)=
\begin{cases}
Q\otimes_A M& \text{if} \; n=0\\
0 &\text{if} \; n\geq 1
\end{cases}.
\end{equation*}
Dually, if $N$ is a right $A$-module, while $P$ is a \emph{projective} left
$A$-module, then
\begin{equation*}
H_n(A,P\otimes N)=
\begin{cases}
N\otimes_A P& \text{if} \; n=0\\
0 &\text{if} \; n\geq 1
\end{cases}.
\end{equation*}
\end{lem}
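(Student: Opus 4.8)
The plan is to compute the Hochschild homology with coefficients in the $A$-bimodule $M\otimes Q$, whose left $A$-action is carried by $M$ and whose right $A$-action is carried by $Q$, and to reduce everything to the case in which $Q$ is free of rank one. First I would record that $H_\ast(A,-)$ is the homology of the two-sided analogue of the complex of Section 2, namely $C_n(A,N)=N\otimes A^{\otimes n}$ with the face operators transported to this setting, and that in degree zero
\[
H_0(A,M\otimes Q)=(M\otimes Q)/\{A,M\otimes Q\}.
\]
Unwinding the supercommutator relations $am\otimes q\equiv(-1)^{|a|(|m|+|q|)}m\otimes qa$ identifies this quotient with the graded tensor product $Q\otimes_A M$, which already settles the $n=0$ claim; it remains to kill homology in positive degrees.

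Next I would treat the generating case $Q=A$, where $M\otimes A$ carries the bimodule structure $a\cdot(m\otimes b)\cdot a'=(am)\otimes(ba')$. Here I would exhibit an explicit contracting homotopy $s\colon C_n(A,M\otimes A)\to C_{n+1}(A,M\otimes A)$ by
\[
s\big((m\otimes a_0)\otimes a_1\otimes\cdots\otimes a_n\big)=(m\otimes 1)\otimes a_0\otimes a_1\otimes\cdots\otimes a_n,
\]
that is, by pushing the $Q$-slot $a_0$ into the bar part and replacing the coefficient by $m\otimes 1$. Because the inserted element is the unit (of degree $0$), $s$ itself carries no Koszul sign, and a direct expansion shows $ds+sd=\mathrm{id}$ on $C_n$ for $n\geq 1$: the face that multiplies the reinserted $a_0$ back onto the coefficient reproduces the original chain, while every remaining face of $ds$ cancels against $s$ applied to the corresponding face of $d$, exactly as in the contraction of the bar resolution. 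Thus $H_n(A,M\otimes A)=0$ for $n\geq 1$, and the same computation in degree $0$ recovers $H_0\cong A\otimes_A M\cong M$.

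Finally I would pass from the free rank-one case to an arbitrary projective $Q$ by d\'evissage. The functor $Q\mapsto H_n(A,M\otimes Q)$ is additive and commutes with direct sums, since $M\otimes(-)$, $(-)\otimes A^{\otimes n}$ and the homology functor all do; hence the vanishing in positive degrees extends first to every free module $Q=A^{(I)}$ and then, writing a projective $Q$ as a retract of a free module, to every projective $Q$, a retract of zero being zero. Combined with the degree-zero identification this proves the first assertion, and the dual statement follows verbatim after replacing $A$ by its opposite superalgebra. The only genuinely delicate point is the sign bookkeeping in the homotopy identity: one must check that the Koszul signs attached to the face maps of Section 2 are precisely those required for the telescoping cancellation, and this is the step I would carry out most carefully.
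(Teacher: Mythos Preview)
Your argument is correct. For the base case $Q=A$ you do exactly what the paper does---recognize the Hochschild complex of $M\otimes A$ as the bar resolution of $M$---but you make the acyclicity explicit via the extra-degeneracy homotopy $s$, which is a welcome clarification; the simplicial identities $d_0s=\mathrm{id}$ and $d_is=sd_{i-1}$ indeed survive the Koszul signs because the inserted tensor factor has degree~$0$. Where you genuinely diverge from the paper is in the passage to an arbitrary projective $Q$: the paper writes down an explicit chain-level identification $(M\otimes Q)\otimes A^{\otimes n}\cong Q\otimes_A\bigl(M\otimes A\otimes A^{\otimes n}\bigr)$ and then invokes exactness of $Q\otimes_A-$, whereas you argue by d\'evissage, using that $Q\mapsto H_n(A,M\otimes Q)$ is additive and that a projective is a retract of a free module. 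Your route is slightly more elementary and avoids checking that the explicit maps $f,g$ are well defined over $\otimes_A$ and compatible with the differentials; the paper's route, on the other hand, gives a one-shot isomorphism of complexes that makes the naturality in $Q$ manifest. Either way the conclusion is the same, and your appeal to the opposite superalgebra for the dual statement is legitimate.
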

\begin{proof}
We shall assume, first, that $Q=A$, which is, clearly, projective, when
regarded as right $A$-module. Moreover, in this case we have $A\otimes_AM\cong
M$, so what we have to prove is that
\begin{equation*}
H_n(A,M\otimes A)=
\begin{cases}
M& \text{if} \; n=0\\
0 &\text{if} \; n\geq 1
\end{cases}.
\end{equation*}
It is easily seen, however, that the standard complex for computing the
Hochschild homology of $A$ with coefficients in the module $M\otimes A$ is,
essentially, the (unnormalized) bar resolution $\beta$ of the $ M$, which has
non-vanishing homology only in degree zero and the zero degree homology is $M$.

To prove now the general case, take $Q$ an arbitrary projective right
$A$-module. Then the functor $Q\otimes_A -$ is exact and the result follows
from the isomorphism $(M\otimes Q)\otimes A^n\cong Q\otimes_A(M\otimes A\otimes
A^n)$ established by the maps
\begin{equation*}
f:(M\otimes Q)\otimes A^n\to Q\otimes_A(M\otimes A\otimes A^n),
\end{equation*}
\begin{equation*}
f((m\otimes q)\otimes(a_1\otimes \dots \otimes a_n))=(-1)^{|m||q|}
q\otimes(m\otimes 1\otimes a_1\otimes \dots \otimes a_n)
\end{equation*}
and
\begin{equation*}
g:Q\otimes_A(M\otimes A^{n+1})\to (M\otimes Q)\otimes A^n,
\end{equation*}
\begin{equation*}
g(q\otimes(m\otimes a_0\otimes \dots\otimes a_n))=(-1)^{|m||q|} (m\otimes
q)\otimes a_0a_1\otimes a_2\otimes\dots \otimes a_n.
\end{equation*}
The proof of the second part of the lemma is completely similar.
\end{proof}
\begin{proof}[Proof of the theorem~\ref{teorema}]
We consider the following family of modules and maps: $(C_{p,q},d',d'')$, where
\begin{equation*}
C_{m,n}=P\otimes B^n\otimes Q\otimes A^m,
\end{equation*}
where
\begin{equation*}
B^n=\underbrace{B\otimes B\otimes \dots \otimes B}_{n\;\text{factors}}
\end{equation*}
and
\begin{equation*}
A^m=\underbrace{A\otimes A\otimes \dots \otimes A}_{m\;\text{factors}},
\end{equation*}
and all the tensor products are considered over the ground superring $R$. Before defining
the maps $d'$ and $d''$, several remarks are in order.

First of all, it is very clear that
\begin{equation*}
C_{m,n}=C_m(A,P\otimes B^n\otimes Q),
\end{equation*}
i.e. $C_{m,n}$ is the group of the Hochschild $m$-chains of the superalgebra $A$, with
the coefficients in the  $A$-bimodule $P\otimes B^n\otimes Q$. On the other hand, up to a
cyclic permutation of the factors in the tensor product, $C_{m,n}$ is, also, the group of
the Hochschild $n$-chains of the superalgebra $B$ with coefficients in a $B-B$-bimodule.
More specifically, we have
\begin{equation*}
C_{m,n}=\omega_{m+1,n+1}\left(C_n(B, Q\otimes A^m\otimes P)\right),
\end{equation*}
where $\omega_{m+1,n+1}:Q\otimes A^m\otimes P\otimes B^n\to P\otimes B^n\otimes
Q\otimes A^m$ is the cyclic permutation of factors given by
\begin{equation*}
\begin{split}
&\omega_{m+1,n+1}(p\otimes b_1\otimes \dots \otimes b_n\otimes q\otimes a_1\otimes\dots\otimes
a_m)=\\
&=(-1)^{|p|+|q|+\sum\limits^m_{i=1}|a_i|+\sum\limits^n_{j=1}|b_j|}q\otimes a_1\otimes\dots\otimes
a_m\otimes p\otimes b_1\otimes\dots\otimes b_n.
\end{split}
\end{equation*}
Now we can use the Hochschild differentials to build the maps $d'$ and $d''$.
Let $m,n\in \mathbb{N}$ two given natural numbers. We define now, for any pair
of natural numbers, $m,n\in \mathbb{N}$, $d'_{m,n}:C_{m,n}\to C_{m-1,n}$ to be
 the Hochschild differential for $A$, with coefficients in
$P\otimes B^n\otimes Q$. Thus, on the columns we have Hochschild complexes. On
the other hand, also for any pair of natural numbers $m,n$ we define the
horizontal differentials $d''_{m,n}:C_{m,n}\to C_{m,n-1}$,
\begin{equation*}
d''_{m,n}=(-1)^mb_{m,n}\circ \omega_{m+1,n+1},
\end{equation*}
where $b_{m,n}:C_n(B,Q\otimes A^m\otimes P)\to C_{n-1}(B,Q\otimes A^m\otimes
P)$ is  the Hochschild differential. From the construction, it is obvious that
both $d'$ and $d''$ are differentials. We will prove now that they anticommute.
We have
\begin{align*}
&d''d'(p\otimes b_1\otimes\dots\otimes b_n\otimes q\otimes a_1\otimes\dots\otimes a_n)=
d''\bigg(p\otimes b_1\otimes\dots \otimes b_n\otimes qa_1\otimes a_2\otimes\dots\otimes a_m+\\ &
+\sum\limits_{i=1}^{m-1}(-1)^ip\otimes b_1\otimes\dots\otimes b_n\otimes q\otimes
a_1\otimes\dots\otimes a_ia_{i+1}\otimes\dots\otimes a_m+\\
&+(-1)^{m+|a_m|\left(|p|+|q|+\sum\limits_{j=1}^{m-1}|a_j|+\sum\limits_{j=1}^{n}|b_j|\right)}a_mp\otimes
b_1\otimes\dots\otimes b_n\otimes q\otimes a+1\otimes \dots\otimes a_{m-1} \bigg)=\\
&=(-1)^m\bigg[pb_1\otimes\dots\otimes b_n\otimes qa_1\otimes a_2\otimes\dots\otimes
a_m+\\
&+\sum\limits_{j=1}^{n-1}p\otimes b_1\otimes\dots\otimes b_jb_{j+1}\otimes \dots\otimes
b_n\otimes qa_1\otimes a_2\otimes\dots\otimes a_m+\\
&+(-1)^{n+|b_n|\left(|p|+|q|+\sum\limits_{j=1}^{m}|a_j|+\sum\limits_{j=1}^{n-1}|b_j|\right)}p\otimes
b_1\otimes\dots\otimes b_{n-1}\otimes b_nqa_1\otimes a_2\otimes\dots\otimes a_m+\\
&+\sum\limits_{i=1}^{m-1}(-1)^i\bigg(pb_1\otimes b_2\otimes\dots\otimes b_n\otimes q\otimes
a_1\otimes\dots a_ia_{i+1}\otimes \dots\otimes a_m  + \\
&+(-1)^{n+|b_n|\left(|p|+|q|+\sum\limits_{j=1}^{m}|a_j|+\sum\limits_{j=1}^{n-1}|b_j|\right)}p\otimes
b_1\otimes\dots \otimes b_{n-1}\otimes b_nq\otimes a_1\otimes\dots\otimes
a_ia_{i+1}\otimes\dots\otimes a_m \bigg)+\\
&+(-1)^{m+|a_m|\left(|p|+|q|+\sum\limits_{j=1}^{m-1}|a_j|+\sum\limits_{j=1}^{n}|b_j|\right)}\bigg(a_mpb_1\otimes
b_1\otimes\dots\otimes b_n\otimes q\otimes a_1\otimes\dots\otimes a_{m-1}+\\
&+\sum\limits_{j=1}^{n-1}(-1)^j a_mp\otimes b_1\otimes\dots b_j b_{j+1}\otimes\dots b_n\otimes
q\otimes a_1\otimes \dots \otimes a_{m-1}+\\
&+(-1)^{n+|b_n|\left(|p|+|q|+\sum\limits_{j=1}^{m}|a_j|+\sum\limits_{j=1}^{n-1}|b_j|\right)}
a_mp\otimes b_1\otimes b_{n-1}\otimes b_nq\otimes a_1\otimes\dots\otimes a_{m-1}\bigg)\bigg].
\end{align*}
On the other hand,
\begin{align*}
&d'd''(p\otimes b_1\otimes \dots b_n\otimes q\otimes a_1\otimes \dots\otimes
a_m)=(-1)^{m-1}d'\bigg(pb_1\otimes b_2\otimes\dots\otimes b_n\otimes q\otimes a_1\otimes
\dots\otimes a_m+\\ &+\sum_{i=1}^{n-1}(-1)^ip\otimes b_1\otimes\dots\otimes
b_ib_{i+1}\otimes\dots\otimes b_n\otimes q\otimes a_1\otimes\dots\otimes a_m+\\
&+(-1)^{n+|b_n|\left(|p|+|q|+\sum\limits_{j=1}^{m}|a_j|+\sum\limits_{j=1}^{n-1}|b_j|\right)}
p\otimes b_1\otimes\dots\otimes b_{n-1}\otimes b_nq\otimes a_1\otimes\dots\otimes
a_m\bigg)=
\end{align*}
\begin{align*}
&=(-1)^{m-1}\Bigg[pb_1\otimes b_2\otimes\dots\otimes b_n\otimes qa_1\otimes a_2\otimes
\dots\otimes a_m+\\ &+\sum\limits^{m-1}_{j=1}(-1)^jpb_1\otimes b_2\otimes\dots\otimes
b_n\otimes a\otimes a_1\otimes\dots \otimes a_j a_{j+1}\otimes\dots\otimes a_m+\\
&+(-1)^{m+|a_m|\left(|p|+|q|+\sum\limits_{k=1}^{m-1}|a_k|+\sum\limits_{k=1}^{n}|b_k|\right)}
a_mpb_1\otimes b_2\otimes\dots\otimes b_n\otimes q\otimes a_1\otimes\dots\otimes
a_m+\\&+\sum\limits_{i=1}^{n-1}(-1)^i\Bigg(p\otimes b_1\otimes\dots\otimes
b_ib_{i+1}\otimes\dots\otimes b_n\otimes q\otimes a_1\otimes\dots\otimes
a_m+\\&+\sum\limits_{j=1}(-1)^jp\otimes b_1\otimes\dots\otimes b_i
b_{i+1}\otimes\dots\otimes b_n\otimes q\otimes a_1\otimes\dots\otimes a_j
a_{j+1}\otimes\dots\otimes a_m+\\
&+(-1)^{m+|a_m|\left(|p|+|q|+\sum\limits_{j=1}^{m-1}|a_j|+\sum\limits_{j=1}^{n}|b_j|\right)}a_mp\otimes
b_1\otimes\dots\otimes bib_{i+1}\otimes\dots\otimes b_n\otimes q\otimes
a_1\otimes\dots\otimes a_{m-1} \Bigg)+\\ &+
(-1)^{n+|b_n|\left(|p|+|q|+\sum\limits_{j=1}^{m}|a_j|+\sum\limits_{j=1}^{n-1}|b_j|\right)}\Bigg(p\otimes
b_1\otimes\dots b_{n-1}\otimes b_nqa_1\otimes a_2\otimes\dots\otimes a_m +\\
&+\sum\limits_{j=1}^{m-1}(-1)^jp\otimes b_1\otimes\dots\otimes b_{n-1}\otimes b_nq\otimes
a_1\otimes\dots\otimes a_j a_{j+1}\otimes\dots\otimes a_m+\\
&+(-1)^{m+|a_m|\left(|p|+|q|+\sum\limits_{j=1}^{m-1}|a_j|+\sum\limits_{j=1}^{n}|b_j|\right)}a_mp\otimes
b_1\otimes\dots\otimes b_{n-1}\otimes b_nq\otimes a_1\otimes\dots\otimes a_{m-1} \Bigg)
\Bigg].
\end{align*}
An inspection shows immediately that the quantities between the square brackets in the
expressions of $d'd''$ and $d''d'$ coincide, while the signs in front of these brackets
are opposite, which means that we have
\begin{equation*}
d'd''+d''d'=0.
\end{equation*}
Thus, as we saw previously that ${d'}^2={d''}^2=0$, it follows that  the family
of modules and morphisms $(C_{m,n},d',d'')_{m,n\in\mathbb{N}}$ is a
\emph{double complex} of modules. We consider now its total complex, given, for
any $n\geq 0$, by
\begin{equation*}
Tot_n=\bigoplus_{p+q=n}C_{p,q}
\end{equation*}
and
\begin{equation*}
d_n:Tot_n\to Tot_{n-1}, \quad, d_n=\sum_{p+q=n}(d'_{p,q}+d''_{p,q}).
\end{equation*}
As it is well-known (see \cite{rotman}, from where the notations, classical, in
fact, are taken), the total complex has two canonical filtrations (a horizontal
and a vertical one) and to each of this filtration we can associate a spectral
sequence. The two spectral sequences both converge to the homology of the total
sequence. We shall show that in our case both spectral sequences collapse at
the second step. In fact, the second order terms of the two sequences are
\begin{equation*}
^IE^2_{p,q}=H'_pH''_{p,q}(C)
\end{equation*}
and
\begin{equation*}
^{II}E^2_{p,q}=H''_pH'_{q,p}(C).
\end{equation*}
In our particular case, due to the particular form of the vertical and
horizontal complexes, we get
\begin{equation}\label{proj1}
H''_{p,q}(C)=H_q(B,Q\otimes A^p\otimes P)
\end{equation}
and
\begin{equation}\label{proj2}
H'_{q,p}(C)=H_q(A,P\otimes B^p\otimes Q).
\end{equation}
As $P$ is a bimodule which is projective  at both sides, applying the previous
lemma, we can write
\begin{align*}
H''_{p,q}(C)&=
\begin{cases}
P\otimes _BQ\otimes A^p&\text{for}\;\;q=0\\
0&\text{for}\;\;q\geq 1
\end{cases}\\
H'_{q,p}(C)&=
\begin{cases}
B^p\otimes _AQ\otimes P&\text{for}\;\;q=0\\
0&\text{for}\;\;q\geq 1
\end{cases}
\end{align*}
As a consequence, we obtain for the second terms of the two spectral sequences:
\begin{align*}
^{I}E^2_{p,q}&=
\begin{cases}
H_p(A, P\otimes _BQ)&\text{for}\;\; q=0\\
0&\text{for}\;\;q\geq 1
\end{cases}
\\
^{II}E^2_{p,q}&=
\begin{cases}
H_p(B, Q\otimes _AP)&\text{for}\;\; q=0\\
0&\text{for}\;\;q\geq 1
\end{cases}
\end{align*}
Since, as we see, the two spectral sequences collapse, their limits coincide,
in fact, with the second terms. Therefore, as they should converge to the same
limit (the homology of the total complex), we have, in particular, that, for
any $n\geq 0$, we should have
\begin{equation*}
^{I}E^2_{n,0}=^{II}E^2_{n,0},
\end{equation*}
i.e.
\begin{equation*}
H_n(A, P\otimes_BQ)=H_n(B,Q\otimes_AP)
\end{equation*}
which concludes the proof (the functoriality follows from the way we
constructed the double complex).
\end{proof}
\begin{cor}
If $A$ and $B$ are Morita equivalent superalgebras, then they have isomorphic
Hochschild homologies.
\end{cor}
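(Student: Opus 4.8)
The plan is to deduce the corollary directly from Theorem~\ref{teorema} by taking as coefficients the very bimodules that witness the Morita equivalence. Recall that the Hochschild homology of a superalgebra $A$ is, by definition, $H_*(A)=H_*(A,A)$, the homology with coefficients in $A$ regarded as a bimodule over itself. So I would fix an $A$-$B$-bimodule $P$ and a $B$-$A$-bimodule $Q$ with $P\otimes_B Q\simeq A$ and $Q\otimes_A P\simeq B$, as furnished by the definition of Morita equivalence, and feed this pair into the theorem.

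First I would verify that $P$ is projective as a left $A$-module and as a right $B$-module, since this is precisely the hypothesis the theorem requires. This is the standard consequence of a Morita context: choosing the isomorphisms $\mu\colon P\otimes_B Q\to A$ and $\tau\colon Q\otimes_A P\to B$ to be compatible, one writes $1_A=\mu\left(\sum_i p_i\otimes q_i\right)$ and exhibits $P$ as a direct summand of a finitely generated free $B$-module through the maps $p\mapsto(\tau(q_i\otimes p))_i$ and $(b_i)\mapsto\sum_i p_i b_i$, whose composite is the identity by the associativity relation $\mu(p\otimes q)\cdot p'=p\cdot\tau(q\otimes p')$. The symmetric argument on the other side yields projectivity of $P$ over $A$.

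With projectivity in hand, I would apply Theorem~\ref{teorema} to this pair $(P,Q)$, obtaining
\begin{equation*}
H_*(A,P\otimes_B Q)\simeq H_*(B,Q\otimes_A P).
\end{equation*}
Finally I would rewrite both sides using the Morita isomorphisms: since $P\otimes_B Q\simeq A$ as $A$-bimodules and Hochschild homology is functorial in the coefficient bimodule, one has $H_*(A,P\otimes_B Q)\simeq H_*(A,A)=H_*(A)$, and likewise $H_*(B,Q\otimes_A P)\simeq H_*(B,B)=H_*(B)$. Composing the three isomorphisms gives $H_*(A)\simeq H_*(B)$, which is the assertion.

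The main obstacle I anticipate is the projectivity step, not the formal chain of isomorphisms. The definition of Morita equivalence adopted here asks only for the existence of the two bimodule isomorphisms separately, whereas the dual-basis argument needs them to be suitably compatible, that is, to assemble into a genuine Morita context; one must therefore invoke the classical fact that compatible isomorphisms can always be arranged, and, in the $\mathbb{Z}_2$-graded setting, check that the Koszul signs entering the evaluation and coevaluation maps do not disturb the dual-basis identity. Everything downstream is a routine application of the theorem together with the functoriality of $H_*(A,-)$ in its coefficients.
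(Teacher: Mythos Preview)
Your argument is correct and is exactly the deduction the paper has in mind: the corollary is stated immediately after Theorem~\ref{teorema} with no proof at all, so the paper is treating the substitution $P\otimes_B Q\simeq A$, $Q\otimes_A P\simeq B$ as self-evident. The one point you rightly flag---that the paper's Definition~1 does not assume projectivity of $P$, whereas Theorem~\ref{teorema} does---is indeed a gap the paper leaves implicit; your dual-basis/Morita-context argument (available in Bass~\cite{bass}, which the paper cites for this purpose) is the standard way to close it, and the graded version goes through with the expected Koszul signs.
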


\end{document}